\documentclass[11pt,a4paper]{amsart}

\usepackage{mathptmx}       
\usepackage{helvet}         
\usepackage{courier}        
\usepackage{type1cm}        
%
\usepackage{graphicx}        
\usepackage{multicol}        
\usepackage[bottom]{footmisc}

\usepackage{amsmath}
\usepackage{amsfonts}
\usepackage{amsthm}
\usepackage[english]{babel}

\usepackage[all]{xy}
\usepackage{array,calc,youngtab}
\usepackage{amssymb,stmaryrd}
\usepackage{amscd}
\usepackage{mathtools}
\usepackage{verbatim}





\newcommand{\mR}{\mathbb{R}}
\newcommand{\mC}{\mathbb{C}}

\newcommand{\mE}{\mathbb{E}}
\newcommand{\mZ}{\mathbb{Z}}

\newcommand{\mg}{\mathfrak{g}}


\newcommand{\cK}{\mathcal{K}}

\newcommand{\cJ}{\mathcal{J}}

\newcommand{\fg}{\mathfrak{g}}
\newcommand{\fb}{\mathfrak{b}}
\newcommand{\fh}{\mathfrak{h}}

\DeclareMathOperator{\str}{str}
\DeclareMathOperator{\im}{im}

\newcommand{\mk}{\mathfrak{K}}
\newcommand{\End}{{\rm End}}
\newcommand{\id}{{\rm id}}
\newcommand{\ad}{{\rm ad}}
\renewcommand{\phi}{\varphi}

\newcommand{\minus}{\scalebox{0.9}{{\rm -}}}
\newcommand{\plus}{\scalebox{0.6}{{\rm+}}}

\DeclarePairedDelimiter\abs{\lvert}{\rvert}%
\DeclarePairedDelimiter\norm{\lVert}{\rVert}%
\makeatletter
\let\oldabs\abs
\def\abs{\@ifstar{\oldabs}{\oldabs*}}
\let\oldnorm\norm
\def\norm{\@ifstar{\oldnorm}{\oldnorm*}}
\makeatother

\newtheorem{theorem}{Theorem}[section]
\newtheorem{lemma}[theorem]{Lemma}

\def\qed {{
\parfillskip=0pt
\widowpenalty=10000
\displaywidowpenalty=10000 
\finalhyphendemerits=0 
%
\leavevmode
\unskip
\nobreak
\hfil
\penalty50
\hskip.2em
\null
\hfill
$\square$%
%
\par}}

\begin{document}

\title{The Joseph ideal for $\mathfrak{sl}(m|n)$}
\author{Sigiswald Barbier and Kevin Coulembier}

%
%

\begin{abstract}
Using deformation theory, Braverman and Joseph obtained an alternative characterisation of the Joseph ideal for simple Lie algebras, which included even type A. In this note we extend that characterisation to define a remarkable quadratic ideal for $\mathfrak{sl}(m|n)$. When $m-n>2$ we prove the ideal is primitive and can also be characterised similarly to the construction of the Joseph ideal by Garfinkle.
\end{abstract}

\maketitle

\section{Preliminaries}\label{preliminaries}

We use the notation $\fg = \mathfrak{sl}(m|n)$. See \cite{CW} for the definition and more information on $\mathfrak{sl}(m|n)$ and Lie superalgebras. We take the Borel subalgebra $\fb$ to be the space of upper triangular matrices and the Cartan subalgebra $\fh$ diagonal matrices, both with zero supertrace. With slight abuse of notation we will write elements of $\fh^\ast$ as elements of $\mC^{m+n}$, using bases $\{\epsilon_j,i=1,\ldots,m\}$ of $\mC^m$ and $\{\delta_j,i=1,\ldots,n\}$ of $\mC^n$, with the restriction that the coefficients add up to zero. With this choice and convention, the system of positive roots is given by
$\Delta^{\plus}=\Delta_0^{\plus}\cup
 \Delta_1^{\plus}$, where
\begin{equation*}\label{roots}
 \Delta_0^{\plus}=\{\epsilon_{i} - \epsilon_{j} |\hbox{ $1\le i< j\le m$}\} \cup \{
\delta_{i} - \delta_{j} |\hbox{ $1\le i< j\le n$}\},
\end{equation*}
\begin{equation*}\label{oroots}
 \Delta_1^{\plus}=\{\epsilon_{i} -\delta_j |\hbox{
$1\le i\le m$,
   $1\le  j\le n$}\}.
\end{equation*}

The Borel subalgebra leads to a triangular decomposition of $\mg$ given by $\mathfrak{n}^{\minus} \oplus  \mathfrak{h} \oplus \mathfrak{n}^{\plus}$ where $\fb = \mathfrak{h}\oplus \mathfrak{n}^{\plus}$. A highest weight vector $v_\lambda$of a weight module $M$ satisfies $\mathfrak{n^{\plus}} \cdot v_\lambda=0$ and $\fh \cdot v_\lambda=\lambda(\fh) \cdot v_\lambda.$ The corresponding weight $\lambda \in \fh^\ast$ will be called a highest weight. We use the notation $L(\lambda)$ for the simple module with highest weight $\lambda\in\fh^\ast$. We also set $\rho_0=\frac{1}{2}\sum_{\alpha\in\Delta^{\plus}_0}\alpha $ and $\rho=\rho_0-\frac{1}{2}\sum_{\gamma\in\Delta_{1}^{\plus}}\gamma$, so concretely
\begin{equation}\label{rho}\rho=\frac{1}{2}\sum_{i=1}^m (m-n -2i+1)\epsilon_{i} +\frac{1}{2} \sum_{j=1}^n (n+m-2j+1)\delta_j.\end{equation}

We choose the form $(\cdot,\cdot)$ on $\mC^{m+n}$, and on $\fh^\ast$ by restriction, by setting $(\epsilon_i,\epsilon_j)=\delta_{ij}$, $(\delta_j,\delta_k)=-\delta_{jk}$ and $(\epsilon_i,\delta_j)=0$.

From now on we consider only weights $\lambda$ which are {\it integral}, that is $(\lambda+\rho,\alpha^\vee)\in\mZ$ for all $\alpha \in\Delta_{0}$, with $\alpha^\vee:=2\alpha/(\alpha,\alpha)$. If $(\lambda+\rho,\alpha^\vee)> 0$, for all $\alpha\in\Delta_0^{\plus}$, we say that the integral weight $\lambda$ is {\it dominant regular}.

Denote by $C$ the quadratic {\em Casimir operator}. It is an element of the center of $U(\mg)$ and it acts on a highest weight vector of weight $\lambda$ by the scalar 
\begin{equation} \label{Action Casimir operor}
C \cdot v_{\lambda}=(\lambda+2\rho,\lambda).
\end{equation}

We denote by $M^{\vee}$ the dual module of $M$ in category $\mathcal{O}$, see e.g. \cite[chapter 3]{Humphreys}. The functor $\vee$ is exact and contravariant, we have that $L(\lambda)^\vee \cong L(\lambda)$ and for finite dimensional modules $(M \otimes N)^\vee \cong M^\vee \otimes N^\vee.$


We set $V=\mC^{m|n}$ the natural representation of $\mg$. We will use the notation $A^i{}_j $ for an element in $V\otimes V^\ast$ and we have the identification  $V\otimes V^\ast\cong V^\ast\otimes V$ given by $A^i{}_j \cong (-1)^{\abs{i}\abs{j}}A_j{}^i,$ where $\abs{\cdot}$ is the parity function, i.e. $\abs{i}=0$ for $i\in \{1,\ldots,m\}$ and $\abs{i}=1$ for $i \in \{m+1,\ldots, m+n\}$.
We define the \textit{supertrace} $\str$ as the $\mg$-morphism \[
 \str \colon  V\otimes V^\ast \to \mC \quad A^i{}_j \mapsto \sum_i (-1)^{\abs{i}} A^i{}_i.\]
If $m\not=n$ the supertrace gives a decomposition of $ V\otimes V^\ast $ in a traceless and a pure trace part. 
The Lie superalgebra $\mg$ consist exactly of the traceless elements in $V \otimes V^\ast$.
We will use the identification $V\otimes V^\ast\cong V^\ast\otimes V $ for taking the supertrace of higher order tensor powers. For example, if $A \in V\otimes V^\ast\otimes V\otimes V^\ast$, then the supertrace over the first and last component is given by\[
\str_{1,4}\colon V\otimes V^\ast\otimes V\otimes V^\ast \to V^\ast\otimes V;
\quad
A^i{}_j{}^k{}_l \mapsto \sum_i (-1)^{\abs{i}+\abs{i}(\abs{k}+\abs{j})}A^i{}_j{}^k{}_i.
\]
With these conventions $\str$ always corresponds to a $\mg$-module morphism.

We will also use the \textit{Killing form}
\[\langle\cdot,\cdot\rangle\colon \mg\times \mg\to \mC \quad \langle A, B\rangle= 2(m-n)\sum_{i,j} (-1)^{\abs{i}} A^i{}_j B^j{}_i,
\]
which satisfies $\langle A,B\rangle=\str_{\fg}(\ad_{\fg}(A)\ad_{\fg}(B))$. This is an invariant, even, supersymmetric form. If $m-n\not=0$ it is non-degenerate.
We introduce the corresponding $\fg$-module morphism $\cK = {2(m-n)} \str_{1,4}\circ \str_{2,3}$:
\[ \cK: V\otimes V^\ast\otimes V\otimes V^\ast\to \mC; \quad A^i{}_j{}^k{}_l\mapsto 2(m-n)\sum_{i,j}(-1)^{|i|}A^i{}_j{}^j{}_i.\]
In particular, for $A,B$ in $\fg$ we have $\cK(A\otimes B)=\langle A,B\rangle$.

\section{Second tensor power of the adjoint representation for $\mathfrak{sl}(m|n)$}
In this section we will always set $\fg=\mathfrak{sl}(m|n)$ with $m\not=n$.  We will also always assume $m\not=0\not=n$. In case $m=1$ one needs to replace all $\epsilon_2$ occurring in formulae by
$\delta_1$ and for $n=1$ one replaces $\delta_{n-1}$ by $\epsilon_m$.  Furthermore  $V$ will be the natural $\mathfrak{sl}(m|n)$ module and we identify $\mathfrak{sl}(m|n)$ with the corresponding tensors in $V\otimes V^\ast$.

\begin{theorem} \label{decompostion of the second tensor product}
For $\fg=\mathfrak{sl}(m|n)$ with $|m-n|> 2$, the second tensor power of the adjoint representation $\fg\otimes\fg\cong\fg\odot\fg\oplus \fg\wedge\fg$ decomposes as
\begin{eqnarray*} \fg\odot\fg&\cong& L_{2\epsilon_1-\delta_{n-1}-\delta_n}\oplus L_{\epsilon_1+\epsilon_2-2\delta_n}\oplus L_{\epsilon_1-\delta_n}\oplus L_0,\\
\fg\wedge\fg &\cong& L_{2\epsilon_1-2\delta_n}\oplus L_{\epsilon_1+\epsilon_2-\delta_{n-1}-\delta_n}\oplus L_{\epsilon_1-\delta_n}.\end{eqnarray*}
\end{theorem}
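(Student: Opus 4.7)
The plan is to embed $\fg\otimes\fg$ inside $(V\otimes V^\ast)^{\otimes 2}$ and decompose that ambient module, then peel off the trace pieces. Using the super-twist rearrangement $V\otimes V^\ast\otimes V\otimes V^\ast\cong V\otimes V\otimes V^\ast\otimes V^\ast$ combined with $V\otimes V=S^2V\oplus\Lambda^2V$ and $V^\ast\otimes V^\ast=S^2V^\ast\oplus\Lambda^2V^\ast$, a direct computation with super signs shows that the super-swap of the two copies of $V\otimes V^\ast$ corresponds, under the rearrangement, to $\sigma^V\otimes\sigma^{V^\ast}$. Hence
\[(V\otimes V^\ast)^{\odot 2}\cong(S^2V\otimes S^2V^\ast)\oplus(\Lambda^2V\otimes\Lambda^2V^\ast),\quad (V\otimes V^\ast)^{\wedge 2}\cong(S^2V\otimes\Lambda^2V^\ast)\oplus(\Lambda^2V\otimes S^2V^\ast).\]

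Next I would decompose each of the four tensor products. On each piece the partial supertrace $V\otimes V\otimes V^\ast\otimes V^\ast\to V\otimes V^\ast$ is (up to sign) independent of which $V$ is paired with which $V^\ast$, and fits into a short exact sequence whose image is all of $V\otimes V^\ast\cong\fg\oplus L_0$ in the parallel cases $S^2V\otimes S^2V^\ast$ and $\Lambda^2V\otimes\Lambda^2V^\ast$, but only $\fg$ in the mixed cases (the scalar double-contraction vanishes by parity, as one checks by symmetrising in $V$ and antisymmetrising in $V^\ast$). The kernel is generated by an explicit highest-weight vector such as $(e_1\odot e_1)\otimes(f_{m+n-1}\odot f_{m+n})$ in the first piece, and the sequence splits because the Casimir scalars $(\lambda+2\rho,\lambda)$ on the composition factors of each piece are pairwise distinct. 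Collecting, one obtains
\begin{align*}
S^2V\otimes S^2V^\ast&\cong L_{2\epsilon_1-\delta_{n-1}-\delta_n}\oplus L_{\epsilon_1-\delta_n}\oplus L_0,\\
\Lambda^2V\otimes\Lambda^2V^\ast&\cong L_{\epsilon_1+\epsilon_2-2\delta_n}\oplus L_{\epsilon_1-\delta_n}\oplus L_0,\\
S^2V\otimes\Lambda^2V^\ast&\cong L_{2\epsilon_1-2\delta_n}\oplus L_{\epsilon_1-\delta_n},\\
\Lambda^2V\otimes S^2V^\ast&\cong L_{\epsilon_1+\epsilon_2-\delta_{n-1}-\delta_n}\oplus L_{\epsilon_1-\delta_n}.
\end{align*}
Finally, subtracting the $2\fg\oplus L_0$ arising from $V\otimes V^\ast\cong\fg\oplus L_0$ inside $(V\otimes V^\ast)^{\otimes 2}\cong\fg\otimes\fg\oplus 2\fg\oplus L_0$, and matching symmetry under the super-swap (parallel summands land in $\fg\odot\fg$ together with one copy of $\fg$ and of $L_0$; mixed summands land in $\fg\wedge\fg$ together with one copy of $\fg$) yields precisely the decomposition of the theorem.

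The main obstacle is establishing the splitting of the four short exact sequences and the simplicity of the kernels. A direct Casimir calculation shows that within each of the four tensor products the eigenvalues $(\lambda+2\rho,\lambda)$ of the three (respectively two) composition factors are pairwise distinct exactly when $|m-n|>2$; for $|m-n|\le 2$ at least one coincidence occurs (for instance $L_{\epsilon_1+\epsilon_2-2\delta_n}$ and $L_{\epsilon_1-\delta_n}$ share Casimir value when $m-n=2$), leaving room for a nontrivial indecomposable extension. The hypothesis $|m-n|>2$ therefore enters essentially to guarantee the clean splittings and to ensure that the named highest-weight vectors really do generate simple summands rather than proper subquotients of a larger indecomposable block.
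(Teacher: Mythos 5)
Your route --- embedding $\fg\otimes\fg$ into $(V\otimes V^\ast)^{\otimes 2}$, rearranging to $V\otimes V\otimes V^\ast\otimes V^\ast$ and decomposing the four pieces $S^2V\otimes S^2V^\ast$, $\Lambda^2V\otimes\Lambda^2V^\ast$, $S^2V\otimes\Lambda^2V^\ast$, $\Lambda^2V\otimes S^2V^\ast$ --- is genuinely different from the paper's, which works directly inside $\fg\otimes\fg$: there one enumerates all possible highest weight vectors by solving the highest-weight conditions, constructs explicit splitting morphisms $\phi$ and $\psi$ for the trace parts, and obtains simplicity of the two remaining summands in each half from self-duality in category $\mathcal{O}$. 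Your identification of the super-swap with $\sigma^V\otimes\sigma^{V^\ast}$, the final bookkeeping (subtracting $2\fg\oplus L_0$), and the computation of where Casimir coincidences occur (e.g.\ $m-n=\pm 2,\pm 1$) are all correct.

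However, there is a genuine gap at the decisive step: the four displayed decompositions are asserted, not proven. You claim each kernel of the partial supertrace is ``generated by an explicit highest-weight vector'' and then let the splitting and the simplicity rest on the Casimir scalars of ``the composition factors of each piece'' being pairwise distinct. This is circular and, more importantly, the Casimir mechanism cannot do the job you need: it only separates generalized eigenspaces, while every composition factor hiding inside the kernel automatically shares the kernel's Casimir eigenvalue, so distinctness of the listed values can never show that the kernel is simple, nor even that it is a highest weight module generated by the named vector. This is a real danger here, since finite-dimensional $\mathfrak{sl}(m|n)$-modules are not semisimple and the relevant highest weights ($\epsilon_1-\delta_n$, $2\epsilon_1-\delta_{n-1}-\delta_n$, etc.) are atypical --- for instance $(\lambda+\rho,\epsilon_m-\delta_1)=0$ for $\lambda=2\epsilon_1-\delta_{n-1}-\delta_n$ --- so central-character or typical-character arguments do not determine the composition factors. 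To close the gap you would need, for each of the four pieces, an actual determination of its highest weight vectors (showing the weight-$\lambda^{\mathrm{top}}$ vector is, up to scalar, the only one in the kernel and that it generates), combined with a duality argument: the four pieces are $\vee$-self-dual because $S^2V$, $\Lambda^2V$, $S^2V^\ast$, $\Lambda^2V^\ast$ are simple, hence a direct summand with a unique highest weight vector and simple cosocle is simple. That supplementary work is essentially the content of the paper's Lemma on possible highest weights and its self-duality argument; with it, your approach becomes a valid alternative proof, without it the theorem is not established.
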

We define the {\em Cartan product} $\fg\circledcirc \fg$ as the direct summand of $\fg\otimes\fg$ isomorphic to $L_{2\epsilon_1-\delta_{n-1}-\delta_{n}}$. 

To give an explicit expression for the decomposition of the symmetric part we will use a projection operator $\chi\colon \mg \odot \mg \to \mg \odot \mg$ given by $\chi:=\phi \circ \str_{2,3},$ where $\phi$ is the $\mg$-module morphism $\phi \colon V \otimes V^\ast \to V\otimes V^\ast \otimes V \otimes V^\ast$ defined in Lemma~\ref{Definition phi}.  

\begin{theorem} \label{Decomposition of symmetric part}
According to the decomposition of $\fg\odot \fg$ in Theorem~\ref{decompostion of the second tensor product}, respecting that order, a tensor $A \in \fg\odot \fg$ decomposes as $A=B+C+D+E$, where
\begin{itemize}
\item $B^i{}_j{}^k{}_l=\tfrac{1}{2}(A^i{}_j{}^k{}_l - \chi (A)^i{}_j{}^k{}_l) +\tfrac{1}{2} (-1)^{\abs{i}\abs{j}+\abs{i}\abs{k}+\abs{j}\abs{k}}(A^k{}_j{}^i{}_l - \chi (A)^k{}_j{}^i{}_l),$\\ i.e. $B$ is the super symmetrisation  in the upper indices of $A-\chi(A)$;
\item $C = A- \chi(A)-B,$\\ i.e. $C$ is the super antisymmetrisation in the upper indices of $A-\chi(A)$;
\item $E=(2(m-n)^2)^{-1} \cK (A) \phi( \delta )$;
\item $D= \chi(A)- E$.
\end{itemize}
By construction  $\str_{2,3}(B)=0=\str_{2,3}(C)$ and $\cK( D)=0$.

 The explicit formula for $\phi(\delta)$, where $\delta=\delta^i{}_j $ is the Kronecker delta, is given by \[\left(\phi(\delta)\right)^i{}_l{}^k{}_j=((m-n)^2-1)^{-1}\left((-1)^{\abs{k}}(m-n)\delta^i{}_l\delta^k{}_j- \delta^i{}_j  \delta^k{}_l\right).\]

\end{theorem}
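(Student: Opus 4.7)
\emph{Proof plan.} The strategy is to verify that each of the four tensors $B,C,D,E$ lies in the corresponding summand of Theorem~\ref{decompostion of the second tensor product}, exploiting the fact that each summand appears in $\fg\odot\fg$ with multiplicity one. The four summands are distinguished by two $\fg$-invariant properties: vanishing of the middle supertrace $\str_{2,3}$, and super-symmetry type in the upper (equivalently, lower) index pair.

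The first step is to establish that $\chi=\phi\circ\str_{2,3}$ is a $\fg$-equivariant idempotent on $\fg\odot\fg$ whose image is isomorphic to $V\otimes V^{\ast}\cong\fg\oplus L_0$ and whose kernel is the super-traceless part. The key property needed from Lemma~\ref{Definition phi} is $\str_{2,3}\circ\phi=\id$, which together with the $\fg$-equivariance of $\phi$ yields $\chi^{2}=\chi$. Consequently $A-\chi(A)=B+C$ is super-traceless, while $\chi(A)=D+E$ lies in $\im\phi$. To split $A-\chi(A)$ further, one super-symmetrises (respectively super-antisymmetrises) the upper index pair; both operations preserve super-tracelessness and $\fg$-equivariance. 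Since the two super-traceless summands $L_{2\epsilon_1-\delta_{n-1}-\delta_n}$ (Cartan product, characterised by upper and lower super-symmetry) and $L_{\epsilon_1+\epsilon_2-2\delta_n}$ (upper and lower super-antisymmetric) each appear with multiplicity one, the identification with $B$ and $C$ is forced.

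Next, one splits $\chi(A)=D+E$ inside $\im\phi\cong\fg\oplus L_0$. Since $\delta$ spans the trivial summand of $V\otimes V^{\ast}$, the $L_0$-component of $\im\phi$ is spanned by $\phi(\delta)$. By Schur's lemma $\cK$ vanishes on the $\fg$-factor of $\im\phi$, and $\cK$ also vanishes on the super-traceless part $B+C$ because $\cK=2(m-n)\,\str_{1,4}\circ\str_{2,3}$ already factors through $\str_{2,3}$; in particular $\cK\circ\chi=\cK$ on $\fg\odot\fg$. A direct computation from the stated formula yields $\cK(\phi(\delta))=2(m-n)^{2}$, so the projection onto $L_0$ is $(2(m-n)^{2})^{-1}\cK(A)\,\phi(\delta)=E$, and $D:=\chi(A)-E$ automatically satisfies $\cK(D)=0$.

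Finally, the explicit formula for $\phi(\delta)$ is pinned down by an ansatz. Since $\phi(\delta)$ is $\fg$-invariant and the space of $\fg$-invariants in $V\otimes V^{\ast}\otimes V\otimes V^{\ast}$ is two-dimensional, spanned by $(-1)^{|k|}\delta^{i}{}_{l}\delta^{k}{}_{j}$ and $\delta^{i}{}_{j}\delta^{k}{}_{l}$, one writes $\phi(\delta)$ as a linear combination of these. Requiring that $\phi(\delta)$ lie in $\fg\odot\fg$ (equivalently, that the outer supertraces $\str_{1,2}$ and $\str_{3,4}$ vanish) together with the normalisation $\str_{2,3}(\phi(\delta))=\delta$ gives a small linear system for the two coefficients, whose solution matches the stated formula. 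The main obstacle I anticipate is the super-sign bookkeeping in these direct calculations, in particular tracking signs coming from the swap of odd indices when composing supertraces; once the two constants $((m-n)^{2}-1)^{-1}$ and $(m-n)$ are verified, the decomposition structure follows formally from multiplicity one and the $\fg$-equivariance of the trace and symmetrisation projections.
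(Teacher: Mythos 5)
Your proposal is correct and shares the paper's computational core: the idempotent $\chi=\phi\circ\str_{2,3}$ splitting $\fg\odot\fg$ into $\ker\chi\oplus\im\phi$, the super-symmetrisation $q$ in the upper indices producing $B$ and $C$, and the supertrace splitting of $\im\phi\cong\fg\oplus L_0$ producing $D$ and $E$. The genuine difference is organisational: you take the multiplicity-one decomposition of Theorem~\ref{decompostion of the second tensor product} as given and force all identifications by Schur's lemma, whereas the paper's proof of Theorem~\ref{Decomposition of symmetric part} is precisely where the symmetric half of that decomposition gets established (via the bound on highest weight vectors in Lemma~\ref{possible highest weights}, the three summands of $\fg\wedge\fg$ from the antisymmetric part, and self-duality under $\vee$ to deduce that $\ker q$ and $\im q$ are simple). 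Since the statement refers explicitly to Theorem~\ref{decompostion of the second tensor product}, your conditional, verification-style argument is legitimate and shorter; the paper's route is what actually proves the decomposition being invoked. Your normalisation of $E$ (Schur's lemma kills $\cK$ off $L_0$, $\cK\circ\chi=\cK$, and $\cK(\phi(\delta))=2(m-n)^2$) and your two-parameter invariant ansatz for $\phi(\delta)$ make explicit what the paper leaves as ``by construction'' or obtains by substituting $B=\delta$ into Lemma~\ref{Definition phi}; both are valid. Two small cautions: the one assertion you should still verify is which of $\im q$, $\ker q$ carries the Cartan product, since multiplicity one only tells you that $q$ acts by $0$ or $\id$ on each simple summand, so you must check the upper-index symmetry type of the highest weight vectors of weights $2\epsilon_1-\delta_{n-1}-\delta_n$ and $\epsilon_1+\epsilon_2-2\delta_n$ (a one-line computation, glossed at the same level in the paper); and when computing $\cK(\phi(\delta))$ note that the displayed formula in the theorem lists the indices of $\phi(\delta)$ in the order $i,l,k,j$ --- your value $2(m-n)^2$ is indeed the correct one for the tensor produced by Lemma~\ref{Definition phi}.
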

The remainder of this section is devoted to the proof of these theorems.

\begin{lemma}\label{possible highest weights}
The possible highest weights of the $\fg$-module $\fg\otimes \fg$ are
\[
2 \epsilon_1-2 \delta_n, \; 2 \epsilon_1-\delta_{n-1}-\delta_{n} , \: \epsilon_1+\epsilon_2 - 2 \delta_n , \; \epsilon_1 +\epsilon_2-\delta_{n-1} -\delta_n, \; \epsilon_1- \delta_n, \; 0.
\]
 The space of highest weight vectors for $\epsilon_1-\delta_n$ has at most dimension 2 and for the other weights at most 1.

\end{lemma}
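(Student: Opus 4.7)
The plan is a direct weight-space analysis. Since $\fg=\mathfrak{sl}(m|n)$ with $m\neq n$ is simple, its adjoint representation is the irreducible highest weight module $L_{\epsilon_1-\delta_n}$. Consequently every highest weight $\mu$ of a $\fg$-subquotient of $\fg\otimes\fg$ is $\fg_{\bar 0}$-dominant, satisfies $\mu\preceq 2\epsilon_1-2\delta_n$ in the partial order generated by $\Delta^{\plus}$, and is realisable as a sum of two weights of $\fg$. Since the weights of $\fg$ have coordinates in $\{-1,0,1\}$ in the basis $\{\epsilon_i,\delta_j\}$, the coordinates of $\mu$ lie in $[-2,2]$ and sum to zero by the supertrace condition.

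The first step is to enumerate all $\fg_{\bar 0}$-dominant weights meeting these constraints. Besides the six listed in the statement, the list contains a handful of extra candidates such as $\epsilon_1-\epsilon_m$, $\delta_1-\delta_n$, $\delta_1-\epsilon_m$, $2\epsilon_1-2\epsilon_m$, $2\delta_1-2\delta_n$, $2\epsilon_1-\epsilon_m-\delta_n$, $\epsilon_1+\delta_1-2\delta_n$, $\epsilon_1+\epsilon_2-\epsilon_{m-1}-\epsilon_m$ and $\delta_1+\delta_2-\delta_{n-1}-\delta_n$. For each extra candidate $\mu$ I would exhibit a positive root $\gamma$ (typically an odd one, for example $\epsilon_m-\delta_n$) for which $E_\gamma:(\fg\otimes\fg)_\mu\to(\fg\otimes\fg)_{\mu+\gamma}$ is injective, thereby ruling $\mu$ out as a possible highest weight. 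To illustrate: for $\mu=2\epsilon_1-2\epsilon_m$ the weight space is one-dimensional, spanned by $E_{\epsilon_1-\epsilon_m}\otimes E_{\epsilon_1-\epsilon_m}$, on which $E_{\epsilon_m-\delta_n}$ acts nontrivially.

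For the six weights in the statement I would then compute the joint kernel of $\{E_\gamma:\gamma\in\Delta^{\plus}\}$ on $(\fg\otimes\fg)_\mu$ explicitly. The weight $2\epsilon_1-2\delta_n$ has a one-dimensional weight space so the bound is trivial, while for $2\epsilon_1-\delta_{n-1}-\delta_n$, $\epsilon_1+\epsilon_2-2\delta_n$ and $\epsilon_1+\epsilon_2-\delta_{n-1}-\delta_n$ the weight space has dimension at most $4$, and one or two well-chosen $E_\gamma$ (such as $E_{\delta_{n-1}-\delta_n}$ or $E_{\epsilon_1-\epsilon_2}$) reduce the joint kernel to a single line. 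The case $\mu=0$ is settled by the one-dimensionality of $(\fg\otimes\fg)^{\fg}$, a standard consequence of the non-degeneracy of the Killing form. The main obstacle is the analysis at $\mu=\epsilon_1-\delta_n$, where the weight space has dimension $4(m+n)-6$, spanned by tensors $h\otimes E_{\epsilon_1-\delta_n}$ and $E_{\epsilon_1-\delta_n}\otimes h$ with $h\in\fh$, together with $E_{\epsilon_1-\alpha}\otimes E_{\alpha-\delta_n}$ and its swap for each intermediate weight $\alpha\in\{\epsilon_2,\ldots,\epsilon_m,\delta_1,\ldots,\delta_{n-1}\}$. Imposing $E_\gamma\cdot v=0$ for all simple positive roots, combined with the supertrace relation on $\fh$, should collapse this to a two-dimensional space, one dimension corresponding to the symmetric (Casimir) copy of $\fg$ in $\fg\odot\fg$ and the other to the antisymmetric (bracket) copy in $\fg\wedge\fg$.
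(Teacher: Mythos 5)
Your overall strategy (enumerate the $\fg_{\bar 0}$-dominant weights occurring in $\fg\otimes\fg$ and kill the spurious ones by explicit raising-operator computations) is workable in principle, but one step is logically wrong and another is far heavier than you acknowledge. The wrong step is $\mu=0$: the one-dimensionality of $(\fg\otimes\fg)^{\fg}$ concerns the space of $\fg$-invariants, which is \emph{contained in} the space of weight-zero vectors annihilated by $\mathfrak{n}^{+}$; it therefore gives a lower bound on the space of highest weight vectors of weight $0$, not the upper bound the lemma requires. Since finite-dimensional $\mathfrak{sl}(m|n)$-modules need not be semisimple, a weight-zero vector killed by $\mathfrak{n}^{+}$ need not be $\fg$-invariant, and you cannot appeal to semisimplicity of $\fg\otimes\fg$ without circularity, because this lemma is precisely an ingredient in proving that decomposition. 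The heavy step is the enumeration itself: your candidate list is explicitly non-exhaustive, and a complete list of dominant regular sums of two roots also contains weights such as $2\epsilon_1-\epsilon_{m-1}-\epsilon_m$, $2\delta_1-\epsilon_m-\delta_n$, $\epsilon_1+\delta_1-\epsilon_m-\delta_n$, $\epsilon_1+\delta_1-\delta_{n-1}-\delta_n$, etc., each needing its own exclusion; moreover the decisive case $\epsilon_1-\delta_n$, with its $\bigl(4(m+n)-6\bigr)$-dimensional weight space, is only asserted to ``collapse'' to dimension two.

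The paper removes both difficulties with one observation you are missing: any highest weight vector of $\fg\otimes\fg$ has a nonzero component of the form $X_{\epsilon_1-\delta_n}\otimes A$ with $A\in\fg$ (leading term with respect to the first tensor factor), so its weight is $\epsilon_1-\delta_n+\mu$ with $\mu\in\Delta\cup\{0\}$. Combined with dominance this cuts the candidates to thirteen weights at once — in particular it disposes of $2\epsilon_1-2\epsilon_m$, $\delta_1-\epsilon_m$, and all the mixed candidates above without any computation — and, because the assignment $v\mapsto A$ is injective on the space of highest weight vectors of weight $\epsilon_1-\delta_n+\mu$, the dimension bounds come for free from $\dim\fg_\mu=1$ when $\mu$ is a root (this covers the weight $0$, which corresponds to $\mu=-(\epsilon_1-\delta_n)$). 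Only $\lambda=\epsilon_1-\delta_n$, where $A\in\fh$, requires writing out the simple-root conditions, and only there does one get the bound $2$. To repair your proposal you should either adopt this leading-term argument, or else replace the invariant-theoretic shortcut at $\mu=0$ by an honest joint-kernel computation on the zero-weight space and carry out the full (longer) enumeration and the $\epsilon_1-\delta_n$ computation explicitly.
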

\begin{proof}
A highest weight vector $v_\lambda$ in $\mg \otimes \mg$ is of the form
\[
v_\lambda= X_{\epsilon_1-\delta_n} \otimes A + \cdots, \text{ where } A \in \mg.
\]
Thus the highest weight $\lambda$ is of the form $\lambda = \epsilon_1-\delta_n+ \mu$ with $\mu\in \Delta \cup\{0\}$.  Since it also has to be regular dominant
we have the following possibilities for $\lambda:$
\[
2 \epsilon_1-2 \delta_n, \; 2 \epsilon_1-\delta_{n-1}-\delta_{n} , \: \epsilon_1+\epsilon_2 - 2 \delta_n , \; \epsilon_1 +\epsilon_2-\delta_{n-1} -\delta_n, \; \epsilon_1- \delta_n, \; 0,\;\mbox{ and}
\]
\begin{gather}
\begin{split} 2 \epsilon_1-\epsilon_m-\delta_n,\; \epsilon_1+\epsilon_2-\epsilon_m-\delta_n,\; \epsilon_1-\epsilon_m,\; \delta_1-\delta_n,\; \\ \epsilon_1+\delta_1-2\delta_n,\; \epsilon_1+\delta_1-\delta_{n-1}-\delta_n,\; \epsilon_1-\epsilon_m+ \delta_1- \delta_n.\label{weights that are not possible}
\end{split}
\end{gather}
A corresponding highest weight vector $v_\lambda$ has to satisfy
$ [X, v_\lambda]=0$ for all $X\in\mathfrak{n}^+$. Writing out this condition for all positive simple roots vectors, we deduce that there are no highest weight vectors corresponding to the weights in \eqref{weights that are not possible} and that the dimension of the space of highest weight vectors for $\epsilon_1- \delta_n$ is at most $2$. The fact that for the other possibilities the dimension is at most 1 follows from the dimension of the corresponding root space in $\fg$, which is always 1. 
\end{proof}

We want to construct a $\mg$-module morphism $\phi \colon  V \otimes V^\ast \to V \otimes V^\ast \otimes V \otimes V^\ast$, such that its image is in $\mg \odot \mg$ and $\str_{2,3} \circ \phi = \id$. Thus this morphism has to satisfy the following properties for all $B \in V \otimes V^\ast$
\begin{enumerate}
\item  $\str_{1,2} \phi(B)= 0 $
\item $\phi(B)^i{}_j{}^k{}_l=(-1)^{(\abs{i}+\abs{j})(\abs{k}+\abs{l})}\phi(B)^k{}_l{}^i{}_j$
\item $\str_{2,3} \phi(B) = B$.
\end{enumerate}

\begin{lemma} \label{Definition phi}
Consider the map $\phi\colon V\otimes V^\ast \to V \otimes V^\ast \otimes V \otimes V^\ast$ given by
 \begin{align*}
 \phi(B)^i{}_j{}^k{}_l &=  a \left( (-1)^{\abs{k}} B^i{}_l\delta^k{}_j +(-1)^{(\abs{i}+\abs{j})(\abs{k}+\abs{l})+\abs{i}} B^{k}{}_j\delta^i{}_l \right. 
 + \frac{-2}{m-n} B^i{}_j\delta^k{}_l \\+\frac{-2}{m-n}&(-1)^{(\abs{i}+\abs{j})(\abs{k}+\abs{l})}B^k{}_l\delta^i{}_j +c_1(-1)^{\abs{k}}  \str(B)\delta^i{}_l\delta^k{}_j \left. +  c_2 \str(B)\delta^i{}_j\delta^k{}_l \right).
 \end{align*}
For the constants $a= \frac{m-n}{(m-n)^2-4}$, $c_1=\frac{(m-n)^2+2}{(m-n)(1-(m-n)^2)}$ and $c_2=\frac{3}{(m-n)^2-1}$, the map $\phi$  is a $\mg$-module morphism satisfying conditions 1.-2.-3. above.
\end{lemma}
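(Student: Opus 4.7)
The plan is to verify $\mathfrak{g}$-equivariance and each of the three properties 1--3 in turn.

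$\mathfrak{g}$-equivariance is automatic from the construction: every summand in the definition of $\phi(B)$ is obtained from $B$ (and from $\str(B)\in\mC$) by tensoring with the $\mathfrak{g}$-invariant element $\delta\in V\otimes V^\ast$ with components $\delta^i{}_j$ and by (super)permuting the four tensor factors. Since the supertrace and the super-flip of tensor factors are $\mathfrak{g}$-morphisms, so is $\phi$. Property~2 then follows by inspection: applying the substitution $(i,j)\leftrightarrow(k,l)$ and multiplying by $(-1)^{(\abs{i}+\abs{j})(\abs{k}+\abs{l})}$, the first and second summands are interchanged (the prefactor $(-1)^{\abs{i}}$ in the second summand being exactly what is needed), likewise the third and fourth, while the fifth and sixth are manifestly self-symmetric.

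For properties 1 and 3 I would apply each of the traces $\str_{1,2}(A)^k{}_l=\sum_i(-1)^{\abs{i}}A^i{}_i{}^k{}_l$ and $\str_{2,3}(A)^i{}_l=\sum_j A^i{}_j{}^j{}_l$ (the latter without sign, since positions~2 and~3 are already in $V^\ast\otimes V$ order) to the six summands of $\phi(B)$ separately. After contracting the Kronecker deltas and using the resulting parity identifications such as $\abs{i}=\abs{l}$, every contribution collapses either to a multiple of $B^k{}_l$ (resp.\ $B^i{}_l$) or of $\str(B)\delta^k{}_l$ (resp.\ $\str(B)\delta^i{}_l$). Collecting coefficients, property~1 reduces to the single equation $-\tfrac{2}{m-n}+c_1+c_2(m-n)=0$, while property~3 reduces to the pair $a((m-n)^2-4)/(m-n)=1$ and $1+c_1(m-n)+c_2=0$. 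Substitution verifies all three identities for the stated $a$, $c_1$, $c_2$, and the denominators $(m-n)^2-4$ and $(m-n)^2-1$ show that a solution exists precisely when $|m-n|>2$.

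The only real obstacle is careful bookkeeping of the super-signs, most notably the factor $(-1)^{(\abs{i}+\abs{j})(\abs{k}+\abs{l})+\abs{i}}$ in the second summand, where one must use the parity identity $\abs{i}=\abs{l}$ (forced by $\delta^i{}_l$) to simplify. Once the signs are correctly handled, the remainder is an elementary linear-algebra verification.
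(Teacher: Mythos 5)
Your proposal is correct and follows essentially the same route as the paper: check the supersymmetry in $(i,j)\leftrightarrow(k,l)$ by inspection for condition~2, contract the Kronecker deltas to reduce condition~1 to $c_1+(m-n)c_2-2(m-n)^{-1}=0$ and condition~3 to $a\bigl((m-n)-4(m-n)^{-1}\bigr)=1$ and $1+(m-n)c_1+c_2=0$, and verify the stated constants solve this system, with $\mathfrak{g}$-equivariance a direct check (your observation that each summand is built from $B$, $\str(B)$ and the invariant tensor $\delta$ via super-permutations is a clean way to see it). These are exactly the equations appearing in the paper's proof, so no further comparison is needed.
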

\begin{proof}
One can easily see that $\phi(B)$ is supersymmetric for the indices $(i,j)$ and $(k,l)$, hence it satisfies the second condition. The first condition leads to 
\[
c_1+(m-n)c_2- 2(m-n)^{-1}=0,
\]
while the third condition gives us the following two equations:
\[
a((m-n)-4(m-n)^{-1})=1
\qquad \text{  
and } \qquad 
1+(m-n)c_1+c_2=0.
\]
This system of equations has as solution the constants given in the lemma. One can also check directly that $\phi$ is indeed a $\mg$-module morphism.   
\end{proof}


\noindent{\em Proof of Theorem~\ref{Decomposition of symmetric part}.} Define the $\mg$-module morphism  $\chi\colon \mg \odot \mg \to \mg \odot \mg$ by $\chi= \phi \circ \str_{2,3}$. Since $\str_{2,3} \circ \phi = \id$, we have $\chi^2=\chi$. This implies that the representation splits up into $\ker\chi=\im(1-\chi)$ and $\im\chi=\ker(1-\chi)$. Hence
\[
\mg \odot \mg = \ker \chi \oplus \im \chi.
\]
We have $\im \chi = \im \phi \cong V \otimes V^\ast,$ since $\phi$ is injective. From Section \ref{preliminaries} we know that $V \otimes V^\ast\cong L_{\epsilon_1-\delta_n} \oplus L_0,$ 
where this decomposition is based on the supertrace.

Let $q\in \End_{\fg}(\ker \chi)$ denote the super symmetrisation in the upper indices, so $q^2= q$ and hence
$
\ker \chi = \ker q \oplus \im q.
$

In the proof of Theorem~\ref{decompostion of the second tensor product} we will show that $\mg \wedge \mg$ has three direct summands. From Lemma $\ref{possible highest weights}$ we know that $\mg\otimes \mg$ contains at most seven highest weight vectors, of which thus three are  already contained in $\mg \wedge \mg$. Therefore $\ker q$ and $\im q$ each contain exactly one highest weight vector. Since $\ker q \oplus \im q$ is self-dual in category $\mathcal{O}$
this implies that they are both simple modules. Therefore $\mg\odot \mg= \ker q \oplus \im q \oplus L_{\epsilon_1-\delta_n} \oplus L_0$ is a decomposition in simple modules.
One can verify, by tracking the highest weights of the respective subspaces, that $\ker q = L_{\epsilon_1+\epsilon_2-2\delta_n}$ and that $\im q = L_{2\epsilon_1 -\delta_{n-1}-\delta_n}$. 

By construction, the expressions for projections on simple summands follow. \qed 

\vspace{3.5mm}


\noindent {\em Proof of Theorem~\ref{decompostion of the second tensor product}.}
We have already dealt with the symmetric part in the proof of Theorem~\ref{Decomposition of symmetric part}.
For the antisymmetric part we remark that $\str_{1,4}\str_{2,3} (A)=0$ for all $A \in \mg \wedge \mg$. Thus $\str_{2,3}$ is a $\mg$-module morphism from $\mg \wedge \mg$ to $\fg\cong L_{ \epsilon_1-\delta_n}$. Consider the $\mg$-module morphism 
$\psi\colon \fg \to \mg \wedge \mg$ given by
\begin{align*}
& B \mapsto(m-n)^{-1} \left((-1)^{\abs{k}}B^i{}_l \delta^k{}_j - (-1)^{(\abs{i}+\abs{j})(\abs{k}+\abs{l})+\abs{i}} B^k{}_j\delta^i{}_l\right).
\end{align*}
For this morphism it holds that $\str_{2,3} \circ \psi = \id$. Denote by $q$ again the symmetrisation in the upper indices. Then we find in the same way as for the symmetric part
\[\mg \wedge \mg = \ker q \oplus \im q \oplus \im \psi,
\]
and $\ker q \cong L_{\epsilon_1+\epsilon_2-\delta_{n-1}-\delta_n}, $ $\im q\cong L_{2\epsilon_1-2\delta_n} $ and $\im \psi\cong L_{\epsilon_1-\delta_n}$.    \qed

\section{The Joseph ideal for $\mathfrak{sl}(m|n)$}
In this section we define and characterise the Joseph ideal for $\mg= \mathfrak{sl}(m|n)$, where from now on we always assume $\abs{m-n} >2$. Similar results for $\mathfrak{osp}(m|2n)$ have been obtained in \cite{CSS}.

We define a one-parameter family $\{\cJ_\lambda\,|\,\lambda\in \mC\}$ of quadratic two-sided ideals in the tensor algebra $T(\fg)=\oplus_{j\ge 0}\otimes^j\fg$, where $\cJ_\lambda$ is generated by
\begin{align}
\label{definition jlambda}
\{X\otimes Y- X\circledcirc Y -\frac{1}{2} [X,Y]-\lambda\langle X,Y \rangle \mid  X,Y\in\mg \}\,\subset\, \fg\otimes\fg\,\,\oplus\,\,\fg\,\,\oplus\,\,\mC\,\subset\, T(\fg).
\end{align}
By construction there is a unique ideal $J_\lambda$ in the universal enveloping algebra $U(\fg)$, which satisfies $T(\fg)/\cJ_\lambda \cong U(\fg)/J_\lambda$. Now we define $\lambda^{c}:=-1/{(8(m-n+1))}$.

\begin{theorem}\label{Joseph ideal}
(i) For $\lambda\not=\lambda^c$, the ideal $J_\lambda$ has finite codimension, more precisely $J_\lambda=U(\fg)$ for $\lambda\not\in\{0,\lambda^c\}$ and $J_\lambda=\fg U(\fg)$ for $\lambda=0$.

(ii) For $\lambda=\lambda^c$, the ideal $J_\lambda$ has infinite codimension.
\end{theorem}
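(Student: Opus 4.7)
The plan is to analyse the quotient $U(\fg)/J_\lambda$ by first pinning down the scalar action of the Casimir and then extracting additional constraints from a degree-3 consistency argument. Summing the defining generator $X\otimes Y - X\circledcirc Y - \tfrac{1}{2}[X,Y] - \lambda\langle X,Y\rangle$ of $\cJ_\lambda$ over a pair $\{e_\alpha\},\{f_\alpha\}$ of bases of $\fg$ dual for the Killing form, and using that $\sum_\alpha e_\alpha\otimes f_\alpha$ is $\fg$-invariant while neither $\fg\circledcirc\fg\cong L_{2\epsilon_1-\delta_{n-1}-\delta_n}$ nor $\fg\cong L_{\epsilon_1-\delta_n}$ carries a nonzero $\fg$-invariant (Theorem~\ref{decompostion of the second tensor product}), one obtains
\[
C\equiv \lambda\bigl((m-n)^2-1\bigr)\pmod{J_\lambda}.
\]
This is the basic constraint that drives both parts.

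For part (i), the plan is to derive scalar and $\fg$-valued elements of $J_\lambda$, each polynomial in $\lambda$, from associativity in $T^3(\fg)$. The triple tensor $X\otimes Y\otimes Z$ admits two reductions modulo $\cJ_\lambda$ -- reducing the left-adjacent pair versus the right-adjacent pair and then iteratively handling the mixed pair that arises -- whose difference lies in $T^{\le 2}(\fg)\cap \cJ_\lambda$. Contracting this difference with the $\fg$-module morphisms built in Section~2 (the supertraces $\str_{i,j}$, the projector $\chi$, the Killing form projector $\cK$) and summing over dual bases should produce elements of $J_\lambda$ of the form $c_1\,\lambda(\lambda-\lambda^c)\cdot 1$ and $c_2\,(\lambda-\lambda^c)\cdot X$ for every $X\in\fg$, with explicit nonzero constants $c_1,c_2$. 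For $\lambda\notin\{0,\lambda^c\}$ the scalar is nonzero, so $1\in J_\lambda$ and $J_\lambda=U(\fg)$. For $\lambda=0$ the scalar vanishes but the $\fg$-valued element still gives $\fg\subseteq J_0$; together with the observation that the trivial $\fg$-module satisfies every generating relation of $\cJ_0$ (forcing $J_0\subseteq \fg U(\fg)$), this yields $J_0=\fg U(\fg)$.

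For part (ii), both relations from the previous paragraph vanish at $\lambda=\lambda^c$, so infinite codimension is instead established by exhibiting an infinite-dimensional simple highest weight module $L(\mu^c)$ annihilated by $J_{\lambda^c}$. The weight $\mu^c$ is determined by matching the Casimir eigenvalue $(\mu^c+2\rho,\mu^c)=\lambda^c((m-n)^2-1)$ and by requiring the remaining isotypic projections of the defining quadratic generator to vanish on the highest weight vector $v_{\mu^c}$. Since $\cJ_{\lambda^c}$ is quadratically generated and $L(\mu^c)$ is cyclic, it then suffices to verify
\[
XY\cdot v_{\mu^c} = \bigl(X\circledcirc Y + \tfrac{1}{2}[X,Y] + \lambda^c\langle X,Y\rangle\bigr)\cdot v_{\mu^c}
\]
for all $X,Y\in\fg$; this is a finite calculation on a single vector. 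The expected candidate $\mu^c$ is the natural ``minimal'' highest weight of $\fg$, parallel to the minimal-representation construction in the classical Joseph-ideal picture.

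The hard part will be extracting the polynomial $\lambda(\lambda-\lambda^c)$ in the second paragraph. This requires a careful bookkeeping of how the six non-Cartan isotypic summands of $\fg\otimes\fg$ (Theorem~\ref{decompostion of the second tensor product}) propagate under multiplication by a third factor of $\fg$, and an identification of the combination of contractions whose images actually land in $\mC\oplus\fg\subset T^{\le 2}(\fg)$; the explicit projector formulas from Theorem~\ref{Decomposition of symmetric part}, in particular $\phi(\delta)$, will be essential. The specific value $\lambda^c=-1/(8(m-n+1))$ will emerge from the rational functions in $m-n$ appearing in these projectors. Once these polynomials are in hand, the remaining steps -- identifying $\mu^c$ in part (ii) and verifying the quadratic annihilation on $v_{\mu^c}$ -- are structural and should follow from standard highest weight representation theory for $\mathfrak{sl}(m|n)$.
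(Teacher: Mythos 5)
Your outline follows the same general strategy as the paper (a Braverman--Joseph/ESS-style degree-3 consistency argument for part (i), and an explicit infinite-dimensional representation annihilated by $J_{\lambda^c}$ for part (ii)), but in both parts the step that actually constitutes the proof is left as a hope rather than carried out. For part (i), everything hinges on producing, from the two reductions of a degree-3 tensor, elements of $J_\lambda$ of the form (nonzero constant)$\cdot(\lambda-\lambda^c)\cdot X$ for $X\in\fg$: it is exactly this computation that singles out the value $\lambda^c=-1/(8(m-n+1))$, and you explicitly defer it (``should produce \dots with explicit nonzero constants'', ``the hard part will be extracting the polynomial''). A priori the difference of the two reductions of a generic $X\otimes Y\otimes Z$ could, after your contractions, vanish identically or yield constants that are zero, in which case no conclusion follows; the paper's proof consists precisely in constructing a special tensor $S\in\fg\wedge\fg\otimes\fg$ whose Cartan components vanish with respect to both adjacent pairs of indices, and computing the two explicit reductions $S\simeq -\tfrac12(m-n)(m-n-2)T$ and $S\simeq (m-n)(m-n-2)\bigl(2\lambda(m-n+1)-\tfrac14\bigr)T$ modulo $\cJ_\lambda$. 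Without exhibiting such a tensor (or otherwise proving the constants are nonzero and locating the critical value), part (i) is not established. Your preliminary observation that the Casimir is congruent to a scalar modulo $J_\lambda$ is correct but does not by itself separate the cases.

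Part (ii) has the same character: the logic ``an ad-stable space of quadratic generators annihilating the cyclic highest weight vector annihilates the whole module'' is sound, but you never identify $\mu^c$, never verify the Casimir matching, and never perform the claimed ``finite calculation'' that all generators kill $v_{\mu^c}$ --- and it is not automatic that any highest weight $\mu^c$ with the right Casimir eigenvalue makes the full quadratic relation hold (the non-Cartan, non-scalar isotypic components impose genuine extra conditions). The paper resolves this by a concrete construction: the polynomial realisation $\pi_{(n-m)/2}$ of $\fg$ on $S(\fg_{\minus})$ coming from the 3-grading by $\ad H_{\epsilon_1}$, for which one checks directly (using the projector formulas of Theorem~\ref{Decomposition of symmetric part}) that the defining relation \eqref{rela} holds, so the annihilator of an infinite-dimensional module contains $J_{\lambda^c}$. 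Until you supply an explicit $\mu^c$ (or an explicit realisation) together with the verification of the relations, part (ii) remains unproven.
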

From now on we call the ideal $J_{\lambda^c}$ the {\em Joseph ideal}. If $m-n>2$, we give another characterisation of the Joseph ideal, which generalises the characterisation in \cite{Garfinkle} to type A (super and classical). The classical case, $n=0$, was already obtained through different methods in the proof of Proposition 3.1 in \cite{AB}. For this we need the {\em canonical antiautomorphism} $\tau$ of $U(\fg)$, defined by $\tau(X)=~-X$ for $X \in \mg$.
\begin{theorem} \label{Alternative characterisation of Joseph ideal}
Let $\mg=\mathfrak{sl}(m|n)$ with $m-n>2$.
Any two-sided ideal $\mk$ in $U(\mg)$ of infinite codimension, with $\tau(\mk)=\mk$, such that the graded ideal $gr(\mk)$ in $\odot \mg$ satisfies
\[
(gr(\mk)\cap \odot^2 \mg) \oplus \mg \circledcirc \mg = \odot^2 \mg,
\]
is equal to the Joseph ideal $J_{\lambda^c}$.
\end{theorem}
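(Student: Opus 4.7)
The plan follows the Braverman--Joseph / Garfinkle blueprint: extract explicit quadratic relations from $\mk$ via the graded hypothesis and $\tau$-invariance, forcing $J_{\lambda^c} \subseteq \mk$, then upgrade to equality via a maximality argument. Setting $W := gr(\mk) \cap \odot^2 \mg$, Theorem~\ref{decompostion of the second tensor product} identifies
\[ W \;\cong\; L_{\epsilon_1+\epsilon_2-2\delta_n} \oplus L_{\epsilon_1-\delta_n} \oplus L_0.\]
The antiautomorphism $\tau$ acts as $(-1)^k$ on $\odot^k\mg$ inside $U(\mg)$ via PBW symmetrisation, so in degree $2$ it fixes $\odot^2\mg$, in degree $1$ it negates $\mg$, and in degree $0$ it fixes $\mC$. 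For each $w \in W$ the hypothesis yields a lift to $\mk$ with symbol $w$; averaging that lift with its $\tau$-conjugate (legitimate since $\tau(\mk) = \mk$) produces a $\tau$-symmetric lift $\widetilde{w} = w + c(w) \in \mk$, the potential $\mg$-linear piece being killed because it is $\tau$-antisymmetric. The resulting map $c \colon W \to \mC$ is $\mg$-equivariant, so Schur's lemma forces it to vanish on the two nontrivial summands of $W$ and to take the form $c = \mu \cdot \pi_0$ on $L_0$, where $\pi_0$ is the projection onto $L_0 \subset W$ and $\mu \in \mC$ is a single scalar.

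Next I would convert this data into an ideal inclusion. For $X, Y \in \mg$ one has $XY = X \cdot Y + \tfrac{1}{2}[X,Y]$ in $U(\mg)$, and the splitting $\odot^2 \mg = (\mg \circledcirc \mg) \oplus W$ decomposes $X \cdot Y = X \circledcirc Y + (X \cdot Y)_W$. Combined with the previous step this gives
\[ XY \;\equiv\; X \circledcirc Y + \tfrac{1}{2}[X,Y] + \mu\,\pi_0(X \cdot Y) \pmod{\mk}. \]
The pairing $(X,Y)\mapsto \pi_0(X\cdot Y)$ is an invariant (super)symmetric bilinear form on $\mg$ landing in the multiplicity-one copy of $L_0 \subset \odot^2\mg$, so Schur's lemma makes it proportional to the Killing form: $\pi_0(X \cdot Y) = \kappa \langle X, Y\rangle$ for some scalar $\kappa \in \mC$. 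Comparing with \eqref{definition jlambda} we see that $\mk$ contains the generating set of $\cJ_{\mu\kappa}$, so $J_{\mu\kappa} \subseteq \mk$. Since $\mk$ has infinite codimension, Theorem~\ref{Joseph ideal} forces $\mu\kappa = \lambda^c$, yielding $J_{\lambda^c} \subseteq \mk$.

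The main obstacle is the reverse inclusion $\mk \subseteq J_{\lambda^c}$: the work above is Schur-theoretic and uses only degree-two data, while upgrading containment to equality requires genuine structural information about the quotient $U(\mg)/J_{\lambda^c}$. The natural route, signalled by the primitivity claim in the abstract, is to prove that $J_{\lambda^c}$ is a maximal two-sided ideal of $U(\mg)$; combined with $\mk \ne U(\mg)$ (infinite codimension) this immediately forces $\mk = J_{\lambda^c}$. An alternative, more hands-on route in the spirit of Garfinkle is to compute $gr(J_{\lambda^c})$ directly, verify $gr(\mk) = gr(J_{\lambda^c})$ in every degree by propagating the quadratic relations through the filtration, and then invoke the standard fact that two filtered ideals with identical associated gradeds, one contained in the other, must coincide.
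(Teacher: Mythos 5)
Your first half is sound and is essentially the paper's argument: from the graded hypothesis you lift each element of the complement of $\mg\circledcirc\mg$ in $\odot^2\mg$ to $\mk$, use $\tau$-invariance to remove the degree-one correction (the paper subtracts the $\tau$-image and uses simplicity of $\mg$ to force $Z=0$; your averaging trick is an equivalent formulation), and use the multiplicity-one occurrence of the trivial module in $\fg\odot\fg$ (Theorem~\ref{decompostion of the second tensor product}) to identify the constant term as $\lambda\langle X,Y\rangle$. Together with Theorem~\ref{Joseph ideal}(i) and infinite codimension this correctly yields $J_{\lambda^c}\subseteq\mk$.

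The genuine gap is the reverse inclusion, which you explicitly leave open. Neither of your two suggested routes closes it. Maximality of $J_{\lambda^c}$ is not available: primitivity does not imply maximality, and the paper never proves (nor needs) that $J_{\lambda^c}$ is maximal --- it proves only the weaker statement that any two-sided ideal of \emph{infinite codimension} containing $J_{\lambda^c}$ equals it (Lemma~\ref{ideal containing Joseph ideal is equal to Joseph ideal}). Your second route, ``propagate the quadratic relations through the filtration and compare associated gradeds,'' is precisely where the real work lies and it is not routine: the paper's mechanism is Lemma~\ref{Lemma garfinkle result}, the Garfinkle-type decomposition $\otimes^k\mg\cong L(\lambda^k)\oplus I_k$ where $I_k$ is the degree-$k$ component of the ideal generated by the quadratic relations. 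Its proof requires a nontrivial highest-weight analysis: one shows that $\beta_k=\beta_{k-1}\otimes\mg\cap\mg\otimes\beta_{k-1}$ is simple by listing candidate highest weights and eliminating them through Casimir eigenvalue computations (this is where the hypothesis $m-n>2$ enters, e.g.\ ruling out $\mu=\lambda^k$ via $2k\neq n-m$), and then a duality argument in category $\mathcal{O}$ to get a direct summand. Granting that lemma, the conclusion follows because any strict growth of the graded ideal in some degree $k$ would force it to contain the simple complement $L(\lambda^k)$, hence equal all of $\otimes^k\mg$, contradicting infinite codimension. Without this ingredient (or a substitute for it), your argument proves only $J_{\lambda^c}\subseteq\mk$, not the asserted equality.
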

In the remainder of this section we will prove both theorems.

\vspace{3.5mm}


\noindent{\em Proof of Theorem~\ref{Joseph ideal}.}
Similarly to the proof of Theorem $2.1$ in \cite{ESS} for $\mathfrak{gl}(m)$, to which we refer for more details, we construct a special tensor $S$ in ${ \otimes^3\mg}$, which we will reduce inside $T(\fg)/\cJ_{\lambda}$ in two different ways. This will show that for $\lambda$ different from $\lambda^c$, the ideal $\cJ_\lambda$ contains~$\mg$.
Note that the existence of the tensor $S$ in the setting of \cite{ESS} was already non-constructively proved in~\cite{BJ}.

Consider $T\in\mg$ and define the tensor $S$ as
\begin{align*}
S^a{}_b{}^c{}_d{}^e{}_f &
=  (-1)^{\abs{d}}\delta^e{}_d \delta^c{}_f T^a{}_b-\frac{1}{m-n}\delta^c{}_d \delta^e{}_f T^a{}_b\\
& -(-1)^{\abs{b}+(\abs{a}+\abs{b})(\abs{c}+\abs{d})}\delta^e{}_b \delta^a{}_f T^c{}_d +\frac{1}{m-n}\delta^a{}_b \delta^e{}_f T^c{}_d \\
& +(-1)^{\abs{b}+(\abs{a}+\abs{b})\abs{c}+\abs{d}\abs{e}}\delta^a{}_d \delta^e{}_b T^c{}_f -\frac{1}{m-n}(-1)^{\abs{d}+(\abs{a}+\abs{b})(\abs{c}+\abs{d})}\delta^a{}_d \delta^e{}_f T^c{}_b \\
&-(-1)^{(\abs{c}+\abs{d})\abs{b}+\abs{d}+\abs{b}\abs{e}}\delta^c{}_b \delta^e{}_d T^a{}_f +\frac{1}{m-n}(-1)^{\abs{b}}\delta^c{}_b \delta^e{}_f T^a{}_d.
\end{align*}
One can calculate that
$
\str_{1,2} S= \str_{3,4} S =\str_{5,6} S=0,
$
hence $S\in \otimes^3 \mg$. Remark that we also defined $S$ so that it is antisymmetric in the indices $(a,b)$ and $(c,d)$, hence $S \in \mg \wedge \mg \otimes \mg$. Since the Cartan product lies in $\mg \odot \mg$,  the Cartan part with respect to the first four indices $a,b,c,d$ vanishes. Now we consider (for each $a,b$), the tensor in $\otimes^2\fg$ corresponding to the indices $c,d,e,f$. First we symmetrise, to find a tensor in $\odot^2\fg$. When we apply $1-\chi$ to that tensor and then symmetrise in the upper indices, we obtain zero. Theorem~\ref{Decomposition of symmetric part} thus shows that $S$ also has no part lying in $\fg\otimes \fg\circledcirc \fg$. 

Now, on the one hand, we can reduce $S$ using the fact that the Cartan part vanishes with respect to the first four indices $a,b,c,d$. Then we find
\[
S\simeq -\frac{1}{2}(m-n)(m-n-2)T \quad \mod \mathcal{J}_\lambda.
\]
If, on the other hand, we reduce $S$ using the fact that the Cartan part vanishes with respect to the last four indices $c,d,e,f$, we find

\[
S\simeq (m-n)(m-n-2)( 2\lambda (m-n+1)-\frac{1}{4}) T  \mod \mathcal{J}_\lambda.
\]
Therefore, if $\lambda \not= \lambda^c$, then  $T$ is an element of $\cJ_\lambda$.
Hence, we have proven that $\mg \subset \cJ_\lambda $ for $\lambda \not= \lambda^c$. This also implies for $\lambda\not= 0$, by equation \eqref{definition jlambda}, that $\mC\subset \cJ_\lambda$. Hence $\cJ_\lambda = T(\mg)$ for $\lambda\not\in\{0,\lambda^c\}$  and $\cJ_0 = \oplus_{k>0} \otimes^k \mg$. This proves part $(i)$ of Theorem~\ref{Joseph ideal}. Part $(ii)$ will follow from the construction in Section \ref{realis}.  \qed

\vspace{2mm}

To prove Theorem~\ref{Alternative characterisation of Joseph ideal}, we will need two lemmata. First we define $I_2$ as the complement representation of $\mg \circledcirc \mg$ in $\fg\otimes\fg$  and recursively 
\begin{align}\label{definition I_k}
I_k = I_{k-1} \otimes \mg + \mg \otimes I_{k-1}\;\, \mbox{for }\, k>2.
\end{align}
Denote by	 $\lambda^k$ the highest weight occurring in $\odot^k \mg$, then
$$\lambda^k =\begin{cases} k \epsilon_1 -\delta_{n-k+1}-\delta_{n-k+2 }  - \cdots - \delta_{n-1}- \delta_n&\mbox{for }k\le n,\\
k \epsilon_1 -(k-n)\epsilon_m-\delta_{1}-\delta_{2 }  - \cdots - \delta_{n-1}- \delta_n&\mbox{for }k\ge n.\end{cases}$$

\begin{lemma} \label{Lemma garfinkle result}
Let $\mg=\mathfrak{sl}(m|n)$ with  $m-n>2$. Then $\otimes^k \mg\cong L(\lambda^k)\oplus I_k$.
\end{lemma}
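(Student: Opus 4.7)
My plan is to prove this lemma by induction on $k$, with the base case $k = 2$ being immediate from the definition of $I_2$ as the complement of $\mg \circledcirc \mg = L(\lambda^2)$ in $\mg \otimes \mg$.

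For the inductive step, I would establish the stronger intermediate claim $\otimes^k \mg / I_k \cong L(\lambda^k)$, and then conclude the direct sum decomposition by producing an explicit embedding of $L(\lambda^k)$ as the Cartan summand of $\odot^k \mg \subseteq \otimes^k \mg$, generated by the supersymmetric product $X_{\epsilon_1-\delta_{n-k+1}} \odot \cdots \odot X_{\epsilon_1-\delta_n}$ for $k \leq n$ (and an analogous product involving $X_{\epsilon_1-\epsilon_m}$ factors for $k > n$). To prove the quotient claim, I unravel the recursion to
\[
I_k = \sum_{i=1}^{k-1} \mg^{\otimes(i-1)} \otimes I_2 \otimes \mg^{\otimes(k-1-i)},
\]
so that $\otimes^k \mg / I_k$ is the maximal quotient in which every consecutive pair of tensor slots lies in $\mg \circledcirc \mg$. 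Using the inductive hypothesis $\otimes^{k-1}\mg / I_{k-1} \cong L(\lambda^{k-1})$, the projection $\otimes^k \mg \to L(\lambda^{k-1}) \otimes \mg$ has kernel $I_{k-1}\otimes \mg$, and one is reduced to computing the further quotient $L(\lambda^{k-1}) \otimes \mg / \overline{\mg \otimes I_{k-1}}$.

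This final quotient carries a highest weight vector of weight $\lambda^k$: since $\lambda^k - \lambda^{k-1}$ is a positive root of $\mg$, namely $\epsilon_1 - \delta_{n-k+1}$ when $k \leq n$ and $\epsilon_1 - \epsilon_m$ when $k > n$, the element $v_{\lambda^{k-1}} \otimes X_{\lambda^k - \lambda^{k-1}}$, possibly corrected by lower-weight terms, provides such a vector. The main obstacle will be verifying that this quotient is in fact simple and equal to $L(\lambda^k)$, rather than a larger module. A clean strategy is to construct an explicit surjective $\mg$-module morphism $\pi_k \colon \otimes^k \mg \to L(\lambda^k)$ as the iterated Cartan projection onto adjacent slots, and to check directly that each generator $\mg^{\otimes(i-1)} \otimes I_2 \otimes \mg^{\otimes(k-1-i)}$ of $I_k$ lies in $\ker \pi_k$, since it already fails the Cartan condition on slots $(i,i+1)$. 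The embedding of $L(\lambda^k)$ into $\odot^k \mg \subseteq \otimes^k \mg$ provides a natural section of $\pi_k$, so these combined give $\otimes^k \mg = L(\lambda^k) \oplus I_k$.
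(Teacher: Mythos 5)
There is a genuine gap at exactly the point you yourself flag as ``the main obstacle'', and the strategy you propose to deal with it is circular. Checking that each generator $\mg^{\otimes(i-1)}\otimes I_2\otimes\mg^{\otimes(k-1-i)}$ lies in $\ker\pi_k$ only proves $I_k\subseteq\ker\pi_k$, i.e.\ that $\otimes^k\mg/I_k$ surjects onto $L(\lambda^k)$; the content of the lemma is the reverse bound, that this quotient is no larger than $L(\lambda^k)$, and nothing in your outline delivers it. Moreover the map $\pi_k$ is not available for free: an ``iterated Cartan projection onto adjacent slots'' is not a well-defined projection (the projections on slots $(i,i+1)$ and $(i+1,i+2)$ do not commute, and their composite has no reason to have image a copy of $L(\lambda^k)$ -- whether $L(\lambda^k)$ is a direct summand at all is part of what must be proved). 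Likewise, since finite-dimensional $\mathfrak{sl}(m|n)$-modules are not completely reducible, the submodule generated by your weight-$\lambda^k$ vector $X_{\epsilon_1-\delta_{n-k+1}}\odot\cdots\odot X_{\epsilon_1-\delta_n}$ is a highest weight module but need not be simple, nor split off as a summand, without further argument. A clear symptom of the gap is that your argument never uses the hypothesis $m-n>2$, which is essential to the statement.

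The paper closes precisely this gap by working with $\beta_k:=\beta_{k-1}\otimes\mg\cap\mg\otimes\beta_{k-1}$ (with $\beta_2=\mg\circledcirc\mg$) rather than with the quotient by $I_k$. The decisive step is a classification of the weights $\mu$ of possible highest weight vectors in $\beta_{k+1}$ (either $\mu=\lambda^k$ or $\mu=\lambda^k+\alpha$ with $\alpha$ a root), which are then excluded one by one by applying simple root vectors and by comparing the Casimir eigenvalue $(\mu,\mu+2\rho)$ with $(\lambda^{k+1},\lambda^{k+1}+2\rho)$; this is where $m-n>2$ enters (for instance, a highest weight vector of weight $\lambda^k$ would force $2k=n-m$). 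Uniqueness of the highest weight vector then gives simplicity of $\beta_{k+1}$, self-duality of $\otimes^{k+1}\mg$ under $\vee$ gives that $L(\lambda^{k+1})$ is a direct summand, and a non-degenerate invariant form with $\beta_k^{\perp}=I_k$ and $I_k\cap L(\lambda^k)=0$ supplies the dimension count yielding $\otimes^k\mg=L(\lambda^k)\oplus I_k$. Your proposal would need an analogous highest-weight and Casimir analysis (or some other genuine use of $m-n>2$) to bound $\otimes^k\mg/I_k$ from above; as written, that step is missing.
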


\begin{proof}
Set $\beta_2 = \mg \circledcirc \mg$ and define the submodule $\beta_k$ of $\odot^k \mg$ by
\[
\beta_k := \beta_{k-1} \otimes \mg \cap \mg \otimes \beta_{k-1}, \qquad \text { for } k>2.
\]
We will show by induction that $\beta_k =L(\lambda^k)$ and that this is a direct summand in $\otimes^k\fg$. This holds for $k=2$ by definition. Now we assume that it holds for $k$ and start by proving that all highest weight vectors in $\beta_{k+1}$ are in the 1 dimensional subspace of $\odot^k\fg$ of the vectors with the highest occurring weight $\lambda^{k+1}$.

Let $v_\mu$ be a highest weight vector in $\beta_{k+1}$. Then 
\[
v_\mu = X \otimes v_{\lambda^k} + \cdots, 
\]
 where $v_{\lambda^k}$ is a highest weight vector in $\beta_k=L(\lambda^k)$ and  $X\in \fg$ is a Cartan element or a root vector. It follows that $\mu = \alpha + \lambda^k$ for $\alpha\in\Delta$ or $\mu=\lambda^k$.

First assume $\mu=\lambda^k$. Equation \eqref{Action Casimir operor} implies that $C v_\mu = (\lambda^k, \lambda^k+2\rho)v_\mu$, for $C$ the Casimir operator. Similarly to Lemma 4.5 in \cite{CSS} it follows that $C$ acts on $\beta_{k+1}$ through $(\lambda^{k+1}, \lambda^{k+1}+2\rho).$ A highest weight vector $v_\mu$ in $\beta_{k+1}$ hence implies
\[
(\lambda^k, \lambda^k+2\rho)=(\lambda^{k+1}, \lambda^{k+1}+2\rho).
\]
Using \eqref{rho} it follows that $(\lambda^{k}, \lambda^{k}+2\rho)=2k(k+m-n-1)$, so the displayed condition is equivalent to $2k=-m+n$. As this contradicts $m-n>2$, we conclude that there is no highest weight vector in $\beta_{k+1}$ with weight $\mu=\lambda^k$.


Now assume $\mu = \lambda^k+\alpha$ for $\alpha\in\Delta$. We will consider the case $k\geq n$, the case $k<n$ being similar. Since $\mu$ has to be dominant regular, the possibilities for $\alpha$ are
\begin{gather}
\epsilon_1-\epsilon_m, \epsilon_1-\epsilon_{m-1},\epsilon_2-\epsilon_m,   \epsilon_1-\delta_{n},\epsilon_2-\delta_n, \nonumber\\ \epsilon_2-\epsilon_{m-1}, \epsilon_m-\delta_n, \delta_1-\delta_n,-\epsilon_1+\epsilon_2,-\epsilon_1+\epsilon_m,
\delta_1-\epsilon_1,\delta_1-\epsilon_m, \delta_1-\epsilon_{m-1}. \label{exluded possibilities}
\end{gather}Observe that for example $\epsilon_2-\epsilon_{m-1}$ can not occur since applying $X_{\epsilon_1-\epsilon_2}$ to the highest weight vector should be zero, but the result would contain a term with the factor $X_{\epsilon_1-\epsilon_{m-1}}$ which can not be compensated for. By choosing the appropriate simple root vector, we can eliminate all the possibilities in \eqref{exluded possibilities}

For the root $\epsilon_1-\delta_n$  the Casimir operator acts on $v_\mu$ by \[(\lambda^k+\epsilon_1-\delta_n,\lambda^k+\epsilon_1-\delta_n+2\rho)= 2k(k+m-n)+2(m-n-1). \]
Since this is different from $(\lambda^{k+1}, \lambda^{k+1}+2\rho)=2(k+1)(k+m-n)$, this excludes $\epsilon_1-\delta_n$. Similarly for 
$\epsilon_2-\delta_{n}$, $\epsilon_1-\epsilon_{m-1}$ and $\epsilon_2-\epsilon_m$
we get
\begin{align*}
 (\lambda^k+\epsilon_2-\delta_n,\lambda^k+\epsilon_2-\delta_n+2\rho)&= 2k(k+m-n-1)+2(m-n-2),\\
(\lambda^k+\epsilon_1-\epsilon_{m-1},\lambda^k+\epsilon_1-\epsilon_{m-1}+2\rho)&= 2k(k+m-n)+2(m-1),\\
(\lambda^k+\epsilon_2-\epsilon_m,\lambda^k+\epsilon_2-\epsilon_m+2\rho)&= 2k(k+m-n)+2(m-n-1).
\end{align*}
Because $k\geq n$, these expressions are different from $(\lambda^{k+1}, \lambda^{k+1}+2\rho)$. Hence there exists no $v_\mu$ in $\beta_{k+1}$ for these roots.

We conclude that the only possibility is $\epsilon_1-\epsilon_m$ for $k \geq n$. For $k <n$ we find similarly that  only $\epsilon_1-\delta_{n-k}$ is possible. Therefore $\beta_{k+1}$ contains only one highest weight vector, up to multiplicative constant, namely $v_{\lambda^{k+1}}$. The submodule of $\beta_{k+1}$ (which is also a submodule of $\otimes^{k+1} \fg$) generated by such a highest weight vector must therefore be isomorphic to $L(\lambda^{k+1})$. Since $\otimes^{k+1}\fg$ is self-dual for $\vee$, $L(\lambda^{k+1})$ must also appear as a quotient of $\otimes^{k+1}\fg$. However, as the weight $\lambda^{k+1}$ appears with multiplicity one in $\otimes^{k+1}\fg$, we find $[\otimes^{k+1}\fg:L(\lambda^{k+1})]=1$ and $L(\lambda^{k+1})$ must be a direct summand.

In particular $L(\lambda^{k+1})$ has a complement inside $\beta_{k+1}$. By the above, the latter complement is a finite dimensional weight module which has no highest weight vectors, implying that it must be zero, so $\beta_{k+1}\cong L(\lambda^{k+1})$. Hence we find indeed that for all $k\ge 2$ we have $\beta_{k}\cong L(\lambda^{k})$ and that this is a direct summand in $\otimes^k\fg$.

We have a non-degenerate form on $\otimes^k \mg$ such that $\beta_k^\perp = I_k$ (see Section 4 in \cite{CSS}.) Hence $\dim \otimes^k \mg = \dim \beta_k + \dim I_k$. Since $I_k \cap L(\lambda^k)=0$ we conclude  $\otimes^k \mg = L(\lambda^k) \oplus I_k$, which finishes the proof of the lemma. 
\end{proof}

Any two sided ideal $\mathcal{L}$ in $T(\mg)$ is a submodule for the adjoint representation. Set $T_{\le k}(\fg)=\oplus_{j\le k}\otimes^j \fg$ and define the modules $\mathcal{L}_k \subseteq \otimes^k \mg$ by
$$\mathcal{L}_k=\left((\mathcal{L}+ T_{\le k-1}(\fg))\cap T_{\le k}(\fg)\right)/T_{\le k-1}(\fg).$$
One can easily prove that if there is a strict inclusion $\mathcal{L}^1\subsetneq\mathcal{L}^2$, then there must be some $k$ for which $\mathcal{L}_k^1\subsetneq \mathcal{L}_k^2$, see e.g. the proof of Theorem 5.4 in \cite{CSS}.

\begin{lemma} \label{ideal containing Joseph ideal is equal to Joseph ideal}
Let $\mg=sl(m|n)$ with $m-n>2$. Consider a two-sided ideal $\mk$ in $U(\mg)$. If  $\mk$ contains $J_{\lambda^c}$ and has infinite codimension, then $\mk= J_{\lambda^c}$.
\end{lemma}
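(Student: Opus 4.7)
The plan is to transfer the problem to the tensor algebra $T(\fg)$ via the isomorphism $T(\fg)/\cJ_{\lambda^c}\cong U(\fg)/J_{\lambda^c}$. Let $\cK\subseteq T(\fg)$ be the preimage of $\mk$ under the quotient map $T(\fg)\twoheadrightarrow U(\fg)$; then $\cK\supseteq \cJ_{\lambda^c}$ and $T(\fg)/\cK\cong U(\fg)/\mk$. For any two-sided ideal $\mathcal{L}\subset T(\fg)$ I use the notation $\mathcal{L}_k\subseteq \otimes^k\fg$ introduced just before the lemma, together with the fact (also recorded there) that $\mathcal{L}^1=\mathcal{L}^2$ whenever $\mathcal{L}^1\subseteq \mathcal{L}^2$ and $\mathcal{L}^1_k=\mathcal{L}^2_k$ for every $k$.

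My first step is to identify $(\cJ_{\lambda^c})_k$ with the submodule $I_k\subseteq \otimes^k\fg$ of \eqref{definition I_k}. The leading degree-$2$ term of each generator in \eqref{definition jlambda} is $X\otimes Y - X\circledcirc Y\in I_2$, and these span $I_2$ as $X,Y$ range over $\fg$. By the standard filtered/graded correspondence, $(\cJ_{\lambda^c})_k$ equals the degree-$k$ part of the ideal in $T(\fg)$ generated by $I_2$, which is $\sum_{a+b+2=k}\fg^{\otimes a}\otimes I_2\otimes \fg^{\otimes b}=I_k$ by \eqref{definition I_k}. Consequently $\cK_k\supseteq I_k$, and $\cK_k$ is an adjoint-stable submodule of $\otimes^k\fg$.

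Lemma~\ref{Lemma garfinkle result} gives $\otimes^k\fg=L(\lambda^k)\oplus I_k$ with $L(\lambda^k)$ simple. So $\cK_k\cap L(\lambda^k)\in\{0,L(\lambda^k)\}$, forcing $\cK_k\in\{I_k,\otimes^k\fg\}$ for every $k$. If $\cK_k=I_k=(\cJ_{\lambda^c})_k$ for all $k$, the remark preceding the lemma gives $\cK=\cJ_{\lambda^c}$, so $\mk=J_{\lambda^c}$ as desired.

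To exclude the remaining possibility $\cK_k=\otimes^k\fg$ for some $k$, note that this means $T_{\le k}(\fg)\subseteq \cK+T_{\le k-1}(\fg)$. An induction on $m\ge 0$ then yields $T_{\le k+m}(\fg)\subseteq \cK+T_{\le k-1}(\fg)$: indeed, using that $\cK$ is a two-sided ideal and the inductive hypothesis, $\otimes^{k+m+1}\fg\subseteq \fg\cdot T_{\le k+m}(\fg)\subseteq \fg\cdot \cK+\fg\cdot T_{\le k-1}(\fg)\subseteq \cK+T_{\le k}(\fg)\subseteq \cK+T_{\le k-1}(\fg)$. Thus $U(\fg)/\mk\cong T(\fg)/\cK$ would be a finite-dimensional quotient of $T_{\le k-1}(\fg)$, contradicting the infinite codimension hypothesis on $\mk$. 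The main conceptual input is Lemma~\ref{Lemma garfinkle result}: the simplicity (and multiplicity one) of the Cartan component $L(\lambda^k)$ in $\otimes^k\fg$ is exactly what forces the dichotomy $\cK_k\in\{I_k,\otimes^k\fg\}$.
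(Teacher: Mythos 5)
Your proof is correct and follows essentially the same route as the paper: pass to the kernel $\mathcal{K}\subset T(\fg)$, note $\mathcal{K}_k\supseteq(\cJ_{\lambda^c})_k\supseteq I_k$, invoke Lemma~\ref{Lemma garfinkle result} to force the dichotomy $\mathcal{K}_k\in\{I_k,\otimes^k\fg\}$, and rule out the second case using the two-sided ideal property together with the infinite codimension of $\mk$. The only cosmetic difference is that you assert the full equality $(\cJ_{\lambda^c})_k=I_k$ via a ``filtered/graded correspondence'' (as does the paper, without proof); in fact only the easy inclusion $I_k\subseteq(\cJ_{\lambda^c})_k$ is needed, since in the case $\mathcal{K}_k=I_k$ for all $k$ the equality follows automatically from the sandwich $I_k\subseteq(\cJ_{\lambda^c})_k\subseteq\mathcal{K}_k$.
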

\begin{proof}
 Let $\cJ_\lambda$ be as defined in \eqref{definition jlambda} and denote by $\mathcal{K}$ the kernel of the composition.
$
T(\mg) \twoheadrightarrow U(\mg) \twoheadrightarrow U(\mg)/\mk.
$ We have that $(\cJ_\lambda)_k = I_k$ with $I_k$ as defined in \eqref{definition I_k}. Since $J_{\lambda^c} \subset \mk$, also $(\cJ_{\lambda^c})_k \subset \mathcal{K}_k$ holds.
If $\mathcal{K}$ would be strictly bigger than $\cJ_{\lambda^c}$, then for some $k$, $\mathcal{K}_k$ would be bigger than $(\cJ_{\lambda^c})_k=I_k$. Lemma~\ref{Lemma garfinkle result} would then imply that $\mathcal{K}_k= \otimes^k \mg$ and thus also $\mathcal{K}_l = \otimes^l \mg$ for all $l \geq k$, since $\mathcal{K}$ is a two-sided ideal. This is a contradiction with the infinite codimension of $\mk$. Therefore we conclude that $\mathcal{K}= \cJ_{\lambda^c}$ and thus $\mk=J_{\lambda^c}$.   
\end{proof}


\noindent {\em Proof of Theorem~\ref{Alternative characterisation of Joseph ideal}.}
From the assumed property of $gr(\mk)$ follows that for each $X,Y \in \mg$, we have
\begin{align}\label{equation tau}
XY+(-1)^{\abs{X}\abs{Y}} YX-2 X \circledcirc Y + Z(X,Y) + c(X,Y) \in \mk ,
\end{align}
where $Z(X,Y) \in \mg$ and $c(X,Y) \in \mC$. Since $\mk$ is a two-sided ideal, we can interpret $Z$ and $c$ as $\mg$-module morphism from $\mg\odot \mg$ to $\mg$ and to $\mC$ respectively. Furthermore we assumed $\mk$ to be invariant under the canonical automorphism $\tau$. So
applying $\tau$ to \eqref{equation tau} and subtracting we get that $2Z(X,Y)$ is in $\mk$. If $Z$ would be a morphism different from zero, then it follows from the simplicity of $\mg$ under the adjoint operation that $Z$ is surjective. Hence $\mg \subset \mk$, a contradiction with the infinite codimension of $\mk$. From Theorem~\ref{decompostion of the second tensor product} it also follows that $c(X,Y)  = \lambda \langle X,Y \rangle$ for some constant $\lambda.$
This implies that $J_\lambda \subset \mk$. Since $\mk$ has infinite codimension, Theorem~\ref{Joseph ideal} and Lemma~\ref{ideal containing Joseph ideal is equal to Joseph ideal} imply that $\lambda=\lambda^c=-\frac{1}{8(m-n+1)}$ and $\mk=J_{\lambda^c}$. \qed

\section{A minimal realisation and primitivity of the Joseph ideal}\label{realis}

In \cite{BC} the authors construct polynomial realisations for $\mZ$-graded Lie algebras. Consider the 3-grading on $\mathfrak{gl}(m|n)$ by the eigenspaces of $\ad H_{\epsilon_1}$. We consider the corresponding 3-grading $\fg=\fg_{\minus}\oplus\fg_0\oplus\fg_+$ inherited by the subalgebra $\fg=\mathfrak{sl}(m|n)$.

The procedure in \cite[Section 3]{BC} then gives realisations of $\fg$ as (complex) polynomial differential operators on a real flat supermanifold with same dimensions as $\fg_{\minus}$, so on $\mR^{m-1|n}$. We choose coordinates $x_i$ with corresponding partial differential operators $\partial_i$, for $2\leq i \leq m+n$, both are even for $i\leq m$ and odd otherwise.

As $\fg_0\cong \mathfrak{gl}(m-1|n)$, the space of characters $\fg_0\to\mC$ is in bijection with $\mC$. If we apply the construction in \cite[Section 3]{BC} to the character corresponding to $\mu\in \mC$, we find a realisation $\pi_\mu$ satisfying
\begin{equation}\label{defrep}\pi_{\mu}(X_{\epsilon_j-\epsilon_1})=x_j\quad\mbox{and}\quad\; \pi_\mu(X_{\epsilon_1-\epsilon_j})=(\mu-\mE)\partial_j\quad\mbox{ for }\quad\; 2 \leq j \leq m+n,\end{equation}
with $\mE=\sum_{i=2}^{m+n}x_i\partial_i$. The other expressions for $\pi_\mu$ follow from the above and the fact that, since $\pi_\mu $ is a realisation, we have for all $X,Y$ in $\mg$
\[ \pi_\mu(X) \pi_\mu(Y) - (-1)^{\abs{X}\abs{Y}}  \pi_\mu(Y)\pi_\mu(X)= \pi_\mu([X,Y]).\]

Furthermore for $A \in \mg \odot \mg$, let $A=B +C+D+E$ be the decomposition given in Theorem~\ref{Decomposition of symmetric part}. If we choose $\mu = \frac{n-m}{2}$, then we can calculate
$$ \pi_{\frac{n-m}{2}}(C )=0=\pi_{\frac{n-m}{2}}(D)\;\,\mbox{ and }\;\; \pi_{\frac{n-m}{2}}(E)= \lambda^c \cK(A), \,\mbox{ with }\;\lambda^c = -\frac{1}{8(m-n+1)}.$$
Therefore we conclude
\begin{equation}\label{rela}\left(\pi_{\frac{n-m}{2}}(X\otimes Y) -\pi_{\frac{n-m}{2}}(X \circledcirc Y) - \frac{1}{2} \pi_{\frac{n-m}{2}}([X,Y]) - \lambda^c \pi_{\frac{n-m}{2}}(\langle X,Y \rangle)\right)=0.\end{equation}

Now we interpret $\pi_\mu$ as a representation of $\fg$ on the space of polynomials, {\it i.e.} on $S(\fg_{\minus})$.
Equation \eqref{rela} then implies that the annihilator ideal of the representation $\pi_{\frac{n-m}{2}}$ contains the Joseph ideal $J_{\lambda^c}$.
Since the representation is infinite dimensional, the Joseph ideal must have infinite codimension, which proves part $(ii)$ of Theorem~\ref{Joseph ideal}. For $m-n>2$ it follows from Lemma~\ref{ideal containing Joseph ideal is equal to Joseph ideal} that the Joseph ideal is even equal to the annihilator ideal. Furthermore in this case, it follows clearly from equation \eqref{defrep} that the representation is simple. 

In conclusion, we find that for $m-n>2$, the Joseph ideal $J_{\lambda^c}$ is primitive.
\section*{}

\noindent
{\bf Acknowledgment.}
SB is a PhD Fellow of the Research Foundation - Flanders (FWO). KC is supported by the Research Foundation - Flanders (FWO) and by Australian Research Council Discover-Project Grant DP140103239. The authors thank Jean-Philippe Michel for raising the question which led to the study in Theorem 4.

\vspace{-5mm}

\noindent
SB: Department of Mathematical Analysis, Faculty of Engineering and Architecture, Ghent University, Krijgslaan 281, 9000 Gent, Belgium; \\
E-mail: {\tt Sigiswald.Barbier@UGent.be}

\noindent
KC: School of Mathematics and Statistics, University of Sydney, NSW 2006, Australia;\\
E-mail: {\tt kevin.coulembier@sydney.edu.au}

\end{document}